\numberwithin{equation}{section}
\newtheorem{thm}{Theorem}[section]
\theoremstyle{definition}
\theoremstyle{definition}
\newcommand{\Rn}{\mathbb{R}^{n}}
\newcommand{\R}{\mathbb{R}}
\renewcommand{\H}{\mathbb{H}}
\renewcommand{\H}{\mathbb{H}}
\renewcommand{\H}{\mathcal{H}}
\newcommand {\grtrsim} {\ {\raise-.5ex\hbox{$\buildrel>\over\sim$}}\ }
\newcommand{\e}{\varepsilon}
\newcommand{\khii}{\text{\lower -.4ex\hbox{$\chi$}}}
\DeclareMathOperator{\spt}{spt}
\renewcommand{\a}{\alpha}
\newcommand{\restrict}{\begin{picture}(12,12)
                       \put(2,0){\line(1,0){8}}
                       \put(2,0){\line(0,1){8}}
                      \end{picture}}
\begin{document}
\title {Hausdorff dimension of plane sections and general intersections}
\author{Pertti Mattila}

 \subjclass[2000]{Primary 28A75} \keywords{Hausdorff dimension, projection, intersection}

\begin{abstract} 
This paper extends some results of \cite{M9} and \cite{M1}, in particular, removing assumptions of positive lower density. We give conditions on a general family $P_{\lambda}:\R^n\to\R^m, \lambda \in \Lambda,$ of orthogonal projections  which guarantee that the Hausdorff dimension formula $\dim A\cap P_{\lambda}^{-1}\{u\}=s-m$  holds generically for measurable sets $A\subset\Rn$ with positive and finite $s$-dimensional Hausdorff measure, $s>m$. As an application we prove for measurable sets $A,B\subset\Rn$ with positive $s$- and $t$-dimensional measures that if $s + (n-1)t/n > n$, then   $\dim A\cap (g(B)+z) \geq s+t - n$ for almost all rotations $g$ and  for positively many $z\in\Rn$. We shall also give an application on the estimates of the dimension of the set of exceptional rotations.
\end{abstract}

\maketitle

\section{introduction}

As in \cite{M9}, let $P_{\lambda}:\Rn\to\R^m, \lambda\in\Lambda,$ be orthogonal projections, where $\Lambda$ is a compact metric space.  Suppose that $\lambda\mapsto P_{\lambda}x$ is  continuous for every $x\in\Rn$. Let also $\omega$ be a finite non-zero Borel measure on $\Lambda$. 
These assumptions are just to guarantee that the measurability of the various functions appearing later can easily be checked (left to the reader) and that the forthcoming applications of Fubini's theorem are legitimate.  

We shall first prove the following two theorems. There and later we shall identify absolutely continuous measures with their Radon-Nikodym derivatives. For the notation, see Section \ref{prel}.

\begin{thm}\label{level}
Let $s>m$ and $p>1$. Suppose that $P_{\lambda\sharp}\mu\ll\mathcal L^m$ for $\omega$ almost all $\lambda\in \Lambda$ and that there exists a positive number $C$ such that

\begin{equation}\label{L2}
\iint P_{\lambda\sharp}\mu(u)^p\,d\mathcal L^mu\,d\omega\lambda < C
\end{equation}
whenever $\mu\in\mathcal M(B^n(0,1))$ is such that  $\mu(B(x,r))\leq r^s$ for $x\in \Rn, r>0$. 

If $A\subset\R^n$ is $\mathcal H^s$ measurable, $0<\mathcal H^s(A)<\infty$ and $\theta^s_{\ast}(A,x)>0$ for $\mathcal H^s$ almost all $x\in A$, then for $\mathcal H^s\times\omega$ almost all $(x,\lambda)\in A\times\Lambda$,
\begin{equation}\label{e11}
\dim P_{\lambda}^{-1}\{P_{\lambda}x\}\cap A =s-m,
\end{equation}
and for $\omega$ almost all $\lambda\in \Lambda$,
\begin{equation}\label{e12}
\mathcal L^m(\{u\in\R^m: \dim P_{\lambda}^{-1}\{u\}\cap A = s-m\}) > 0.
\end{equation}
\end{thm}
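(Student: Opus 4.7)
\medskip

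\noindent\emph{Proof plan.}

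\emph{Paragraph 1 (Reduction and upper bound).} First I would reduce to a controlled Frostman measure. By the standard density theorems for Hausdorff measure ($\theta^{\ast s}(A,\cdot)\leq 1$ $\H^s$-a.e.) together with the hypothesis $\theta^s_\ast(A,\cdot)>0$ and Egorov's theorem, one extracts a compact $A_0\subset A$ (contained in $B^n(0,1)$ after scaling) with $\H^s(A_0)>0$ so that $\mu:=c\,\H^s\restrict A_0$ satisfies $\mu(B(x,r))\leq r^s$ for all $x,r$ and $\mu(B(x,r))\geq c_0 r^s$ for $\mu$-a.e.\ $x$ and all sufficiently small $r$. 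Then $\mu$ is a legitimate test measure in \eqref{L2}. The upper bound in \eqref{e11} is immediate from the general slicing inequality
$\int^{*}\H^{s-m}(A\cap P_\lambda^{-1}\{u\})\,d\mathcal L^m u\leq C\,\H^s(A)$,
which gives $\H^{s-m}(A\cap P_\lambda^{-1}\{u\})<\infty$ for $\mathcal L^m$-a.e.\ $u$; combined with $P_{\lambda\sharp}\mu\ll\mathcal L^m$ this delivers $\dim A\cap P_\lambda^{-1}\{u\}\leq s-m$ for $P_{\lambda\sharp}\mu$-a.e.\ $u$, and Fubini converts that to $\mu\times\omega$-a.e.\ $(x,\lambda)$.

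\emph{Paragraph 2 (Lower bound via sliced measures).} For the reverse inequality I would use Mattila's sliced-measure machinery. For $\omega$-a.e.\ $\lambda$ disintegrate $\mu=\int\mu_{\lambda,u}\,du$ with $\mu_{\lambda,u}$ concentrated on $A_0\cap P_\lambda^{-1}\{u\}$ and total mass $g_\lambda(u)$, the density of $P_{\lambda\sharp}\mu$; note that $\mu_{\lambda,P_\lambda x}\neq 0$ for $\mu$-a.e.\ $x$ since $P_{\lambda\sharp}\mu\ll\mathcal L^m$. It then suffices, for every $t<s-m$, to establish the Riesz-energy bound
\[
\iint I_t(\mu_{\lambda,u})\,d\mathcal L^m u\,d\omega\lambda\,<\,\infty,
\]
for Frostman's lemma will produce $\dim\spt\mu_{\lambda,u}\geq t$ on a full $(\mathcal L^m\times\omega)$-measure set, and a countable exhaustion $t_k\nearrow s-m$ then yields the matching lower bound in \eqref{e11} for $\mu\times\omega$-a.e.\ $(x,\lambda)$. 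To prove the energy bound, I would express $I_t(\mu_{\lambda,u})$ via an approximate identity $\phi_\e$ on $\R^m$, decompose the resulting $(x,y)$-integral dyadically in $|x-y|\sim 2^{-k}$, and on each annulus localize and rescale $\mu$ around $x$ to produce a Frostman-$s$ measure $\mu_{x,2^{-k}}$ on $B^n(0,1)$. Applying \eqref{L2} to these rescaled measures together with a H\"older step in $L^p\times L^{p'}$ (which exploits $p>1$) gives an annular estimate summable in $k$ exactly when $t<s-m$.

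\emph{Paragraph 3 (Statement \eqref{e12} and main obstacle).} For \eqref{e12}: by \eqref{e11}, for $\omega$-a.e.\ $\lambda$ the set $E_\lambda=\{u:\dim A\cap P_\lambda^{-1}\{u\}=s-m\}$ satisfies $P_{\lambda\sharp}\mu(E_\lambda)=\mu(A_0)>0$, and absolute continuity of $P_{\lambda\sharp}\mu$ forces $\mathcal L^m(E_\lambda)>0$. The main technical obstacle is the dyadic energy estimate of Paragraph~2: one must set up the approximate identity and the dyadic localization so that every rescaled piece $\mu_{x,2^{-k}}$ is a legitimate input to \eqref{L2}, and the H\"older step must extract enough decay in $2^{-k}$ to beat the critical exponent $s-m$. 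The positive-lower-density hypothesis enters precisely here: it forces $\mu(B(x,r))\asymp r^s$ for $\mu$-a.e.\ $x$, so that the normalized local measures are uniformly non-degenerate as $k\to\infty$ and the constant in \eqref{L2} can be applied uniformly in scale.
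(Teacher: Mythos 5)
Your Paragraphs 1 and 3 are essentially the paper's: the reduction to a Frostman measure with two\-/sided density control, the upper bound via the slicing inequality from \cite{M5}, and the deduction of \eqref{e12} from \eqref{e11} together with $P_{\lambda\sharp}\mu\ll\mathcal L^m$ all match. The gap is in Paragraph 2. You propose to prove the lower bound through the energy estimate $\iint I_t(\mu_{\lambda,u})\,d\mathcal L^m u\,d\omega\lambda<\infty$, with the key step a H\"older pairing ``in $L^p\times L^{p'}$''. Unwinding your dyadic scheme, the annular piece reduces to $\int \delta^{-m}P_{\lambda\sharp}\mu_j(B(P_\lambda x,\delta))\,dP_{\lambda\sharp}\mu_j z\lesssim\int M(P_{\lambda\sharp}\mu_j)\,dP_{\lambda\sharp}\mu_j$, and H\"older then charges you $\|M(P_{\lambda\sharp}\mu_j)\|_{p}\,\|P_{\lambda\sharp}\mu_j\|_{p'}$. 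For $1<p<2$ one has $p'>p$, and the hypothesis \eqref{L2} gives no control on the $L^{p'}$ norm (interpolation against $L^1$ only helps when $p'\leq p$, i.e.\ $p\geq2$). This is not a bookkeeping issue: the energy is quadratic in the sliced measures, an $L^p$ bound with $p$ near $1$ is too weak to control a quadratic quantity, and the double energy integral can genuinely be infinite in the regime the theorem is aimed at (Harris's motivating case is $p=3/2$). Your route works for $p=2$ --- which is exactly the case already treated in \cite{M9} --- but not for general $p>1$.

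The paper's argument (following Harris) replaces the first moment of the slab measure by its $(p-1)$-th moment: one bounds
\begin{equation*}
\iint \bigl(r^{-t}\delta^{-m}\mu(\{y\in B(x,r/2):|P_\lambda(y-x)|\leq\delta\})\bigr)^{p-1}\,d\mu x\,d\omega\lambda ,
\end{equation*}
where the pairing $\int M(f)^{p-1}f\leq\int M(f)^{p}\lesssim\|f\|_p^p$ with $f=P_{\lambda\sharp}\mu_j$ costs exactly the $p$-th power that \eqref{L2} controls after the rescaling $\nu_j=r^{-s}T_{a_j,r\sharp}\mu_j$. Summing over dyadic $r$ yields the pointwise statement $\lim_{r\to0}\liminf_{\delta\to0}r^{-t}\delta^{-m}\mu(\{y\in B(x,r):|P_\lambda(y-x)|\leq\delta\})=0$ for $\mu\times\omega$ almost every $(x,\lambda)$, and the dimension lower bound then comes from mass\-/distribution (Frostman\-/type) upper bounds $\mu_{\lambda,u}(B(x,r))\lesssim r^t$ on the sliced measures, not from finiteness of their energies. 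If you substitute this $(p-1)$-moment estimate for your energy estimate, the rest of your outline goes through, including your correct identification of where the positive lower density enters: with only the unnormalized bound \eqref{L2} available, it is needed to control the number of balls of radius $r$ that $\mu$ charges.
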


\begin{thm}\label{levela}
Let $s>m$ and $p>1$. Suppose that $P_{\lambda\sharp}\mu\ll\mathcal L^m$ for $\omega$ almost all $\lambda\in \Lambda$ and that there exists a positive number $C$ such that 

\begin{equation}\label{L2a}
\iint P_{\lambda\sharp}\mu(u)^p\,d\mathcal L^mu\,d\omega\lambda < C\mu(B^n(0,1))
\end{equation}
whenever $\mu\in\mathcal M(B^n(0,1))$ is such that  $\mu(B(x,r))\leq r^s$ for $x\in \Rn, r>0$. 

If $A\subset\R^n$ is $\mathcal H^s$ measurable and $0<\mathcal H^s(A)<\infty$, then 
for $\mathcal H^s\times\omega$ almost all $(x,\lambda)\in A\times\Lambda$,
\begin{equation}\label{e17}
\dim P_{\lambda}^{-1}\{P_{\lambda}x\}\cap A =s-m,
\end{equation}
and for $\omega$ almost all $\lambda\in \Lambda$,
\begin{equation}\label{e18a}
\mathcal L^m(\{u\in\R^m: \dim P_{\lambda}^{-1}\{u\}\cap A = s-m\}) > 0.
\end{equation}
\end{thm}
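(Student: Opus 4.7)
The plan is to follow the blueprint of Theorem \ref{level} while invoking the stronger integral bound (\ref{L2a}) to replace the role played there by the positive lower density hypothesis. Since $\mathcal{H}^s(A)<\infty$, the classical upper density estimate yields $\theta^{\ast s}(A,x)\le 1$ for $\mathcal{H}^s$-almost every $x\in A$. A standard exhaustion then produces a countable family of compact subsets $A_k\subset A$, each contained in some ball $B^n(x_k,r_k)$, with $\mathcal{H}^s(A\setminus\bigcup_k A_k)=0$ and with the rescaled measures $\mu_k$, obtained from $\mathcal{H}^s\lfloor A_k$ by the affine map carrying $B(x_k,r_k)$ into $B^n(0,1)$, satisfying the Frostman growth $\mu_k(B(x,r))\le r^s$ for all $x$ and $r$. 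Since the lower-bound half of (\ref{e17}) and the positivity in (\ref{e18a}) both behave well under countable unions, it suffices to argue with one such piece. I may therefore assume from the outset that $A\subset B^n(0,1)$ and that $\mu:=\mathcal{H}^s\lfloor A$ already satisfies $\mu(B(x,r))\le r^s$ for all $x,r$.

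The upper bound $\dim A\cap P_\lambda^{-1}\{P_\lambda x\}\le s-m$ for $\mathcal{H}^s\times\omega$-a.e.\ $(x,\lambda)$ is elementary: the classical Fubini-type slicing inequality gives
\[ \int \mathcal{H}^{s-m}(A\cap P_\lambda^{-1}\{u\})\,d\mathcal{L}^m u \le C(n,m)\,\mathcal{H}^s(A)<\infty, \]
so the slice has finite $\mathcal{H}^{s-m}$-measure for $\mathcal{L}^m$-a.e.\ $u$, and the hypothesis $P_{\lambda\sharp}\mu\ll\mathcal{L}^m$ then upgrades this to $P_{\lambda\sharp}\mu$-a.e.\ $u$, i.e.\ to $\mu$-a.e.\ $x$.

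For the matching lower bound, fix $t<s-m$ and consider the disintegration $\mu=\int\mu_{\lambda,u}\,dP_{\lambda\sharp}\mu(u)$, with $\mu_{\lambda,u}$ supported on $A\cap P_\lambda^{-1}\{u\}$. A standard sliced-energy computation bounds a suitable $P_{\lambda\sharp}\mu$-weighted average of the $t$-energy $I_t(\mu_{\lambda,u})$ by the product of $\iint P_{\lambda\sharp}\mu(u)^p\,d\mathcal{L}^m u\,d\omega\lambda$, which is finite by (\ref{L2a}), and a radial integral of the form $\int_0^1 r^{s-m-t-1}\,dr$ coming from the Frostman growth of $\mu$ and converging precisely because $t<s-m$. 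Consequently, for $\omega$-a.e.\ $\lambda$ and $\mu$-a.e.\ $x$ the slice measure $\mu_{\lambda,P_\lambda x}$ has finite $t$-energy and positive mass, which forces $\dim A\cap P_\lambda^{-1}\{P_\lambda x\}\ge t$. Letting $t\nearrow s-m$ along a countable sequence gives (\ref{e17}), and (\ref{e18a}) follows by pushing the good set of $x$'s forward under $P_\lambda$ and invoking once more the absolute continuity of $P_{\lambda\sharp}\mu$.

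The principal obstacle, compared with Theorem \ref{level}, is the loss of the lower density assumption, which in that theorem ensures that balls of positive $\mathcal{H}^s$-mass also carry positive $\mu$-mass and thus lets one pass dimension information from slice measures back to the geometric set $A$. What substitutes for this here is precisely the linearity in $\mu(B^n(0,1))$ on the right of (\ref{L2a}): that linearity is preserved under restricting $\mu$ to arbitrarily small balls with no worsening of the constant $C$, which is exactly what keeps the localisation in the first paragraph and the energy estimate in the third compatible, and what makes the conclusion hold for $\mu$-a.e.\ $x$ rather than merely on some positive-density subset. Keeping track of constants through the affine rescaling and the disintegration is the delicate book-keeping that will require the most care.
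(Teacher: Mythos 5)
Your reduction to $\mu=\mathcal H^s\restrict A$ satisfying the Frostman bound, and your treatment of the upper bound $\leq s-m$ via the slicing inequality and the absolute continuity of $P_{\lambda\sharp}\mu$, match the paper. The gap is in your third paragraph: the ``standard sliced-energy computation'' you invoke does not exist for general $p>1$. Sliced-energy estimates of the kind you describe are intrinsically $L^2$ statements. Indeed, writing $\mu=\int\mu_{\lambda,u}\,dP_{\lambda\sharp}\mu(u)$ with $\mu_{\lambda,u}$ probability measures, the quantity $\iint I_t(\mu_{\lambda,u})\,dP_{\lambda\sharp}\mu(u)\,d\omega\lambda$ is homogeneous of degree $1$ under $\mu\mapsto c\mu$ (degree $2$ if one uses the unnormalised disintegration against $\mathcal L^m$), whereas $\iint P_{\lambda\sharp}\mu(u)^p\,d\mathcal L^mu\,d\omega\lambda$ is homogeneous of degree $p$; so the inequality you assert cannot hold as stated for $p\neq 1$ (resp.\ $p\neq 2$), and you offer no mechanism for extracting a $p$-th power of the density from an energy pairing. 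This is exactly the point where the case $p\neq 2$ requires a genuinely new idea, and it is the reason the paper imports an argument of Harris \cite{H}.

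What the paper does instead avoids energies entirely. For each dyadic scale $r=2^{-j}$ one estimates
\[
\iint \bigl(r^{-t}\delta^{-m}\mu(\{y\in B(x,r/2):|P_{\lambda}(y-x)|\leq\delta\})\bigr)^{p-1}\,d\mu x\,d\omega\lambda
\]
by covering $B(0,1)$ with boundedly overlapping balls $B_j=B(a_j,r)$, recognising the inner expression for each piece $\mu_j=\mu\restrict B_j$ as $(r^{-t}\delta^{-m}P_{\lambda\sharp}\mu_j(B(z,\delta)))^{p-1}$ integrated against $dP_{\lambda\sharp}\mu_j$, and dominating this via the Hardy--Littlewood maximal function by $r^{-t(p-1)}\|P_{\lambda\sharp}\mu_j\|_p^p$. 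Each $\mu_j$ is then rescaled to $\nu_j=r^{-s}T_{a_j,r\sharp}\mu_j\in\mathcal M(B(0,1))$, still Frostman with exponent $s$; the scaling identity \eqref{e1} converts $\|P_{\lambda\sharp}\mu_j\|_p^p$ into $r^{m+p(s-m)}\|P_{\lambda\sharp}\nu_j\|_p^p$, and only now is \eqref{L2a} applied, the linearity in the total mass being what lets the sum over $j$ close up to $\mu(B(0,1))$ rather than to the number of balls. Summing the resulting geometric series over scales and applying monotone convergence and Fatou yields the density statement \eqref{e8} (not an energy statement), which is converted into the dimension lower bound by \cite[Lemma 3.2]{M9}. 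So the linearity of \eqref{L2a} is not, as you suggest, merely what makes a one-time localisation harmless; it is invoked at every scale inside the main estimate, and the multi-scale decomposition together with the exponent $p-1$ on the approximate density is the substitute for the energy method that your sketch is missing.
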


For $p=2$ Theorem \ref{level}  was proved in \cite{M9} and Theorem \ref{levela} follows by the same argument. For $p>1$ the proofs of both theorems have similar strategy but as an essential new ingredient an argument of Harris from \cite{H} is used, see Section \ref{dimension}. 


Theorem \ref{levela} gives a general version of Marstrand's section theorem. For discussion and references for related results, see \cite[Chapter 6]{M6}.

We shall also give a version of Theorem \ref{levela}, Theorem \ref{levelprod}, for product sets and measures and use it to prove the following intersection theorem:

\begin{thm}\label{inter}
Let $s,t>0$ with $s+(n-1)t/n>n$. If $A\subset\R^n$ is $\mathcal H^s$ measurable with $0<\mathcal H^s(A)<\infty$ and $B\subset\R^n$ is $\mathcal H^t$ measurable with $0<\mathcal H^t(B)<\infty$, then for $\mathcal H^s\times\mathcal H^t\times\theta_n$ almost all $(x,y,g)\in A\times B\times O(n)$,
\begin{equation}\label{e5}
\dim  A\cap (g(B-y)+x) \geq s+t-n,
\end{equation}
and for $\theta_n$ almost all $g\in O(n)$,
\begin{equation}\label{e6}
\mathcal L^n(\{z\in\R^n:\dim A\cap (g(B)+z) \geq s+t-n\}) > 0.
\end{equation}
\end{thm}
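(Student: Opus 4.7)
My plan is to realize the intersections $A\cap(g(B-y)+x)$ as fibres of a natural family of orthogonal projections on $\R^{2n}$ and then invoke the product version Theorem \ref{levelprod}. For each $g\in O(n)$ I would take $P_g\colon\R^{2n}\to\R^n$ defined by $P_g(x,y)=(x-g(y))/\sqrt{2}$. This is an orthogonal projection (its kernel is the $n$-dimensional graph $\{(g(y),y):y\in\R^n\}$), the map $g\mapsto P_g(x,y)$ is continuous, and $O(n)$ carries a natural Haar probability measure $\theta_n$, so $\{P_g\}_{g\in O(n)}$ fits the framework of the earlier theorems. The first-coordinate projection $(x,y)\mapsto x$ gives a bi-Lipschitz bijection between the fibre $P_g^{-1}\{P_g(x,y)\}\cap(A\times B)$ and the set $A\cap(g(B-y)+x)$, and between $P_g^{-1}\{u\}\cap(A\times B)$ and $A\cap(g(B)+\sqrt{2}u)$. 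Thus \eqref{e5} and \eqref{e6} will follow from the conclusions of Theorem \ref{levelprod} applied to $A\times B\subset\R^{2n}$ with parameters $s+t$ in place of $s$ and $m=n$.

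The core task will be to verify the product form of hypothesis \eqref{L2a}. Given $\mu,\nu\in\mathcal{M}(B^n(0,1))$ with $\mu(B(x,r))\leq r^s$ and $\nu(B(y,r))\leq r^t$, I would first record that
\[
(P_g)_{\sharp}(\mu\times\nu)=T_{\sqrt{2}\,\sharp}\bigl(\mu\ast\widetilde{g_\sharp\nu}\bigr),
\]
where $\widetilde{\sigma}(E)=\sigma(-E)$ and $T_{\sqrt{2}}$ is the dilation $z\mapsto\sqrt{2}z$. For $p=2$, Plancherel together with the rotation invariance of $\theta_n$ yields
\[
\int_{O(n)}\int_{\R^n}\bigl((P_g)_{\sharp}(\mu\times\nu)\bigr)(u)^2\,du\,d\theta_n(g)\lesssim\int_{\R^n}|\hat\mu(\xi)|^2\,\sigma(|\hat\nu|^2)(|\xi|)\,d\xi,
\]
with $\sigma(|\hat\nu|^2)(r)=\int_{S^{n-1}}|\hat\nu(r\theta)|^2\,d\mathcal{H}^{n-1}(\theta)$. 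I would then invoke Mattila's spherical-average inequality, which gives for Frostman $t$-measures the decay $\sigma(|\hat\nu|^2)(r)\lesssim r^{-(n-1)t/n}$ with a constant depending only on the Frostman constant. Substituting, the right-hand side is controlled by a constant multiple of the $\alpha$-energy of $\mu$ with $\alpha=n-(n-1)t/n$; by the Frostman $s$-condition on $\mu$ and the hypothesis $s+(n-1)t/n>n$ (equivalently $s>\alpha$) this energy is uniformly bounded independently of the particular $\mu,\nu$. Hence \eqref{L2a} holds for $p=2$.

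With \eqref{L2a} established, Theorem \ref{levelprod} applied in $\R^{2n}$ to $A\times B$ (which inherits the product Frostman bound $(\mu\times\nu)(B((x,y),r))\leq r^{s+t}$ from $A$ and $B$) will yield that for $\mathcal{H}^s\times\mathcal{H}^t\times\theta_n$-almost every $((x,y),g)$ the fibre $P_g^{-1}\{P_g(x,y)\}\cap(A\times B)$ has dimension $s+t-n$, and that for $\theta_n$-almost every $g$ the set of $u\in\R^n$ producing a fibre of this dimension has positive Lebesgue measure. Translating these statements through the bi-Lipschitz correspondences of the first paragraph produces \eqref{e5} and \eqref{e6}. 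The main obstacle will be the spherical-average step: the sharp decay exponent $(n-1)t/n$ under only a Frostman hypothesis on $\nu$ is exactly what produces the clean threshold $s+(n-1)t/n>n$ of the statement, and it is the essential harmonic-analytic input driving the proof.
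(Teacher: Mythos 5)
Your overall strategy is exactly the paper's: apply Theorem \ref{levelprod} to the family $P_g(x,y)=x-g(y)$ (the $1/\sqrt2$ normalization is immaterial), reduce \eqref{e5} and \eqref{e6} to \eqref{pr1} and \eqref{pr2} via the first-coordinate projection, and verify the $p=2$ hypothesis by Plancherel, rotation invariance, and a spherical-average decay estimate for $\widehat\nu$. Two points, however, need attention. First, the decay $\sigma(|\widehat\nu|^2)(r)\lesssim r^{-(n-1)t/n}$ for Frostman $t$-measures is not one of the classical Mattila spherical-average bounds (those give weaker exponents in this range); it is the Wolff ($n=2$) and Du--Zhang ($n\geq3$) estimate, it comes with an $\varepsilon$-loss, and the paper absorbs that loss by working with $t'<t$ still satisfying $s+(n-1)t'/n>n$. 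Your proof must do the same, but this is routine.

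The genuine gap is in the form of the bound you end with. You conclude that the left-hand side of \eqref{L3} is ``uniformly bounded independently of the particular $\mu,\nu$,'' i.e.\ you verify a bound by a constant $C$. That is the hypothesis \eqref{L2} of Theorem \ref{level}, not the hypothesis \eqref{L3} of Theorem \ref{levelprod}, which demands the bound $C\,\mu(B^n(0,1))\,\nu(B^n(0,1))$. The mass factors are not cosmetic: in the proof of Theorem \ref{levelprod} the hypothesis is applied to rescaled restrictions $\tilde\mu_j,\tilde\nu_k$ whose total masses $r^{-s}\mu(B(a_j,r))$, $r^{-t}\nu(B(b_k,r))$ are typically tiny, and the summation over the $\sim r^{-2n}$ blocks only closes because each term is controlled by its own mass. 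With only a uniform constant you would be forced back to the positive-lower-density hypothesis of Theorem \ref{level}, which Theorem \ref{inter} deliberately does not assume. To repair this you must track the masses through both inputs: show $\sigma(|\widehat\nu|^2)(r)\lesssim r^{-(n-1)t'/n}\nu(B(0,1))$ (this factor is not stated in Du--Zhang and the paper extracts it by a duality/Schwartz argument applied to $\int|\widehat{f\sigma^{n-1}}|^2\,d\nu_R$), and use $I_{n-(n-1)t'/n}(\mu)\lesssim\mu(B(0,1))$ (which does follow from the Frostman condition since $n-(n-1)t'/n<s$), together with $|\widehat\mu(\xi)|^2\lesssim\mu(B(0,1))$ for the low-frequency part. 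With those refinements your argument coincides with the paper's proof.
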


In \cite{M9} this was proved using Theorem \ref{level}, with $p=2$, and assuming that $A$ and $B$ have positive lower densities. Then also the equality holds in \eqref{e5} and \eqref{e6}. In general the opposite inequality can fail very badly, see \cite{F}.

Theorem \ref{inter} follows immediately from Theorem \ref{levelprod} once the condition \eqref{L3} is verified for the related projections $P_g, P_g(x,y)=x-g(y), x,y\in\Rn, g\in O(n)$, see Section \ref{intersections}. This was already observed in the arXiv version of \cite{M10}, but not in the journal.

Previously the same conclusion was obtained in \cite{M3} under the hypothesis $s+t>n, s>(n+1)/2$. Notice that this and the assumption $s+(n-1)t/n>n$ overlap but neither is implied by the other. I believe that that sole condition $s+t>n$ should suffice. This would be optimal. See \cite[Chapter 7]{M6} and \cite{M10} for discussions and references for such intersection problems.

We shall also use the present method to improve an estimate from \cite{M1} for the dimension of the set of exceptional $g\in O(n)$, see Theorem \ref{exinter}.



\section{Preliminaries}\label{prel}

We denote by $\mathcal L^n$ the Lebesgue measure in the Euclidean $n$-space $\Rn, n\geq 2,$ and by $\sigma^{n-1}$ the surface measure on the unit sphere $S^{n-1}$. The closed ball with centre $x\in\Rn$ and radius $r>0$ is denoted by $B(x,r)$ or $B^n(x,r)$.The $s$-dimensional Hausdorff measure is $\H^s$ and the Hausdorff dimension is $\dim$. 
The orthogonal group of $\Rn$ is $O(n)$ and its Haar probability measure is $\theta_n$. For $A\subset\Rn$  we denote by $\mathcal M(A)$ the set of non-zero finite Borel measures $\mu$ on $\Rn$ with compact support $\spt\mu\subset A$. The Fourier transform of $\mu$ is defined by
$$\widehat{\mu}(x)=\int e^{-2\pi ix\cdot y}\,d\mu y,~ x\in\Rn.$$

For $0<s<n$ the $s$-energy of $\mu\in\mathcal M(\Rn)$ is 
\begin{equation}\label{eq10}
I_s(\mu)=\iint|x-y|^{-s}\,d\mu x\,d\mu y=c(n,s)\int|\widehat{\mu}(x)|^2|x|^{s-n}\,dx.
\end{equation} 
The second equality is a consequence of Parseval's formula and the fact that the distributional Fourier transform of the Riesz kernel $k_s, k_s(x)=|x|^{-s}$, is a constant multiple of $k_{n-s}$, see, for example, \cite{M5}, Lemma 12.12, or \cite{M6}, Theorem 3.10. These books contain most of the background material needed in this paper.

Notice that if $\mu\in\mathcal M(\Rn)$  satisfies the Frostman condition $\mu(B(x,r))\leq r^s$ for all $x\in\Rn, r>0$, then $I_t(\mu)<\infty$ for all $t<s$. We have for any Borel set $A\subset\Rn$ with $\dim A > 0$, cf. Theorem 8.9 in \cite{M5},
\begin{equation}\label{eq3}
\begin{split}
\dim A&=\sup\{s:\exists \mu\in\mathcal M(A)\ \text{such that}\ \mu(B(x,r))\leq r^s\ \text{for all}\ x\in\Rn, r>0\}\\
&=\sup\{s:\exists \mu\in\mathcal M(A)\ \text{such that}\ I_s(\mu)<\infty\}.
\end{split}
\end{equation}

We shall denote by $f_{\#}\mu$ the push-forward of a measure $\mu$  under a map  $f: f_{\#}\mu(A)= \mu(f^{-1}(A))$. The restriction of $\mu$ to a set $A$ is defined by $\mu\restrict A(B)=\mu(A\cap B)$. The notation $\ll$ stands for absolute continuity.

The lower and upper $s$-densities of $A\subset\R^n$ are defined by

$$\theta^s_{\ast}(A,x)=\liminf_{r\to 0}(2r)^{-s}\mathcal H^s(A\cap B(x,r)),$$ 
$$\theta^{\ast s}(A,x)=\limsup_{r\to 0}(2r)^{-s}\mathcal H^s(A\cap B(x,r)).$$ 
If $\mathcal H^s(A)<\infty$, we have by \cite{M5}, Theorem 6.2, (with $\mathcal H^s$ normalized as in \cite{M5}),
\begin{equation}\label{dens}
\theta^{\ast s}(A,x)\leq 1\ \text{for}\ \mathcal H^s\ \text{almost all}\ x\in A.
\end{equation}



By the notation $M\lesssim N$ we mean that $M\leq CN$ for some constant $C$. The dependence of $C$ should be clear from the context. 
By  $c$ we mean positive constants with obvious dependence on the related parameters. 

\section{Dimension of level sets}\label{dimension}

We shall now prove Theorem \ref{levela}, the proof for Theorem \ref{level} is almost the same. The following argument follows very closely that of Harris in \cite{H} for line sections in the first Heisenberg group.

\begin{proof}[Proof of Theorem \ref{levela}]

Note first that using \eqref{dens} our assumptions imply that $P_{\lambda\sharp}(\mathcal H^s\restrict A)\ll\mathcal L^m$ for $\omega$ almost all $\lambda\in\Lambda$.

For any $\lambda\in \Lambda$ the inequality $\dim P_{\lambda}^{-1}\{u\}\cap A \leq s-m$ for $\mathcal L^m$ almost all $u\in\R^m$ follows for example from \cite{M5}, Theorem 7.7. This implies $\dim P_{\lambda}^{-1}\{P_{\lambda}x\}\cap A \leq s-m$ for $\mathcal H^s$ almost all $x\in A$ whenever $P_{\lambda\sharp}(\mathcal H^s\restrict A)\ll\mathcal L^m$. Hence we only need to prove the opposite inequalities. 

Define $\mu = 10^{-s}\mathcal H^s \restrict A$. Due to \eqref{dens} we may assume that $\mu(B(x,r))\leq r^s$ for $x\in \Rn, r>0$, by restricting $\mu$ to a suitable subset of $A$ with large measure. We may also assume that $A$ is compact, which makes it easier to verify the measurabilities. 

Let $0<t<s-m, 0<r<1$, and let $B_j=B(a_j,r)\subset\Rn,j=1,\dots ,j_1,$ be such that $B(0,1)\subset\cup_jB(a_j,r/2)$ and the balls $B_{j}, i=1,\dots,j_1,$ have bounded overlap, that is, there is an integer $N$, depending only on $n$, such that any point of $\Rn$ belongs to at most $N$ balls $B_{j}, i=1,\dots,j_1$. Let $0<\delta<r/2$ and let $\mu_j$ be the restriction of $\mu$ to $B_j$. Then we have

\begin{align*}
&\iint (r^{-t}\delta^{-m} \mu(\{y\in B(x,r/2):|P_{\lambda}(y-x)|\leq\delta\}))^{p-1}\,d\mu x\,d\omega\lambda\\
&\lesssim\sum_j\iint (r^{-t}\delta^{-m} \mu_j(\{y:|P_{\lambda}(y-x)|\leq\delta\}))^{p-1}\,d\mu_j x\,d\omega\lambda
\end{align*}

Here, when $P_{\lambda\sharp}\mu_j\ll\mathcal L^m$,

\begin{align*}
&\int (r^{-t}\delta^{-m} \mu_j(\{y:|P_{\lambda}(y-x)|\leq\delta\}))^{p-1}\,d\mu_j x\\\
&=\int (r^{-t}\delta^{-m} P_{\lambda\sharp}\mu_j(B(z,\delta)))^{p-1}\,dP_{\lambda\sharp}\mu_j z\\
&\lesssim r^{-t(p-1)}\int(M(P_{\lambda\sharp}\mu_j)(z))^{p}\,dz \lesssim r^{-t(p-1)}\|P_{\lambda\sharp}\mu_j\|_p^p,
\end{align*}
where $M(f)$ is the Hardy-Littlewood maximal function of $f$ and the last inequality follows from its $L^p$ boundedness.

It follows that
\begin{align*}
&\iint (r^{-t}\delta^{-m} \mu(\{y\in B(x,r/2):|P_{\lambda}(y-x)|\leq\delta\}))^{p-1}\,d\mu x\,d\omega\lambda\\
&\lesssim r^{-t(p-1)}\sum_j\int\|P_{\lambda\sharp}\mu_j\|_p^p\,d\omega\lambda.
\end{align*}

For $a,x\in\Rn, r>0,$ define $T_{a,r}(x)=(x-a)/r$ and let $\nu_j = r^{-s}T_{a_j,r\sharp}(\mu_j)\in\mathcal M(B(0,1))$.  Then one easily checks that  $\nu_j(B(x,\rho))\leq \rho^s$ for $x\in\Rn$ and $\rho>0$. Moreover, 
\begin{equation}\label{e1}
\|P_{\lambda\sharp}\mu_j\|_p^p = r^{m+p(s-m)}\|P_{\lambda\sharp}\nu_j\|_p^p.\end{equation}
To check this we may assume that $P_{\lambda}(x,y)=x$ for $x\in\R^m, y\in\R^{n-m}$. By approximation we may assume that $\mu_j$ is a continuous function. Then $P_{\lambda\sharp}\mu_j(x)=\int \mu_j(x,y)\,dy, T_{a_j,r\sharp}\mu_j(x,y)=r^n\mu_j((rx,ry)+a_j)$ and 
$$P_{\lambda\sharp}\nu_j(x)=r^{-s}P_{\lambda\sharp}T_{a_j,r\sharp}\mu_j(x,y)=r^{n-s}\int \mu_j((rx,ry)+a_j)\,dy,$$
from which \eqref{e1} follows by change of variable.
By \eqref{L2a},
$$\int\|P_{\lambda\sharp}\nu_j\|_p^p\,d\omega\lambda \lesssim \nu_j(B(0,1)) = r^{-s}\mu(B_j),$$ 
Hence by the bounded overlap,
\begin{align*}
&\iint (r^{-t}\delta^{-m} \mu(\{y\in B(x,r/2):|P_{\lambda}(y-x)|\leq\delta\}))^{p-1}\,d\mu x\,d\omega\lambda\\
&\lesssim r^{(p-1)(s-m-t)}\sum_j\mu(B_j)\lesssim r^{(p-1)(s-m-t)}\mu(B(0,1)).\end{align*}

Summing over $r=2^{-j}, j\geq k,$ and using the fact that $(p-1)(s-m-t)>0$, yields

\begin{align*}
&\sum_{j\geq k}\liminf_{\delta \to 0}\iint (2^{-jt}\delta^{-m} \mu(\{y\in B(x,2^{-j}):|P_{\lambda}(y-x)|\leq\delta\}))^{p-1}\,d\mu x\,d\omega\lambda\\
&\lesssim 2^{-k(p-1)(s-m-t))}\mu(B(0,1)).
\end{align*}

By the monotone convergence theorem and Fatou's lemma this gives
\begin{align*}
&\iint\limsup_{r\to 0}\liminf_{\delta \to 0} (r^{-t}\delta^{-m} \mu(\{y\in B(x,r):|P_{\lambda}(y-x)|\leq\delta\}))^{p-1}\,d\mu x\,d\omega\lambda\\
&=0.
\end{align*}
Hence for $\omega$ almost all $\lambda\in \Lambda$ and $\mu$ almost all $x\in A$, 
\begin{equation}\label{e8}
\lim_{r\to 0}\liminf_{\delta\to 0}r^{-t}\delta^{-m} \mu(\{y\in B(x,r):|P_{\lambda}(y-x)|\leq\delta\}) = 0.
\end{equation}

This is the same as (3.6) in \cite{M9} and after that the proof is essentially the same as that of Theorem \cite[Theorem 3.1]{M9}. 
\end{proof}

For an application to intersections we shall need the following product set version of Theorem \ref{levela}. There $P_{\lambda}:\Rn\times\R^l\to\R^m, \lambda\in\Lambda, m<n+l,$ are orthogonal projections with the same assumptions as before.

\begin{thm}\label{levelprod}
Let $s,t>0$ with $s+t>m$ and $p>1$. Suppose that $P_{\lambda\sharp}(\mu\times\nu)\ll\mathcal L^m$ for $\omega$ almost all $\lambda\in \Lambda$ and there exists a positive number $C$ such that

\begin{equation}\label{L3}
\iint P_{\lambda\sharp}(\mu\times\nu)(u)^p\,d\mathcal L^mu\,d\omega\lambda < C\mu(B^n(0,1))\nu(B^l(0,1))
\end{equation}
whenever $\mu\in\mathcal M(B^n(0,1)), \nu\in\mathcal M(B^l(0,1))$ are such that $\mu(B(x,r))\leq r^s$ for $x\in \Rn, r>0$, and $\nu(B(y,r))\leq r^t$ for $y\in \R^l, r>0$. 

If $A\subset\R^n$ is $\mathcal H^s$ measurable with $0<\mathcal H^s(A)<\infty$ and $B\subset\R^l$ is $\mathcal H^t$ measurable with $0<\mathcal H^t(B)<\infty$, then 
for $\mathcal H^s\times \mathcal H^t\times\omega$ almost all $(x,y,\lambda)\in A\times B\times\Lambda$,
\begin{equation}\label{pr1}
\dim P_{\lambda}^{-1}\{P_{\lambda}(x,y)\}\cap (A\times B) \geq s+t-m,
\end{equation}
and for $\omega$ almost all $\lambda\in \Lambda$,
\begin{equation}\label{pr2}
\mathcal L^m(\{u\in\R^m: \dim P_{\lambda}^{-1}\{u\}\cap (A\times B) \geq s+t-m\}) > 0.
\end{equation}
\end{thm}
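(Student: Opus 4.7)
The plan is to repeat the argument of the proof of Theorem \ref{levela} with the product measure $\mu\times\nu$ on $\R^n\times\R^l$ playing the role of $\mu$ and the exponent $s+t$ playing the role of $s$. Since only the lower bound $\dim\geq s+t-m$ is claimed, no companion upper-bound step is required. As a preliminary normalisation I would use \eqref{dens} together with Frostman's lemma to pass to compact subsets on which the measures $\mu=c\H^s\restrict A$ and $\nu=c'\H^t\restrict B$ satisfy $\mu(B^n(x,r))\leq r^s$ and $\nu(B^l(y,r))\leq r^t$ for all admissible $x,y,r$.

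The core estimate to be established is the product analogue of \eqref{e8}: for every $\tau<s+t-m$, for $\omega$-a.e.\ $\lambda$ and $(\mu\times\nu)$-a.e.\ $(x,y)\in\R^n\times\R^l$,
\begin{equation*}
\lim_{r\to 0}\liminf_{\delta\to 0} r^{-\tau}\delta^{-m}(\mu\times\nu)(\{(x',y')\in B^{n+l}((x,y),r):|P_\lambda((x',y')-(x,y))|\leq\delta\})=0.
\end{equation*}
To prove it I would cover $B^n(0,1)$ by balls $B_j^n=B^n(a_j,r)$ and $B^l(0,1)$ by balls $B_k^l=B^l(b_k,r)$, each family with bounded overlap, set $\mu_j=\mu\restrict B_j^n$, $\nu_k=\nu\restrict B_k^l$, and rescale via $T_{a_j,r}$ and $T_{b_k,r}$ to obtain $\tilde\mu_j\in\mathcal M(B^n(0,1))$ and $\tilde\nu_k\in\mathcal M(B^l(0,1))$ that satisfy the Frostman hypotheses of \eqref{L3} with exponents $s$ and $t$. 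A change-of-variable calculation analogous to \eqref{e1} yields
\begin{equation*}
\|P_{\lambda\sharp}(\mu_j\times\nu_k)\|_p^p=r^{m+p(s+t-m)}\|P_{\lambda\sharp}(\tilde\mu_j\times\tilde\nu_k)\|_p^p,
\end{equation*}
and the Hardy--Littlewood maximal function step, exactly as in the proof of Theorem \ref{levela}, reduces the relevant integrand to $r^{-\tau(p-1)}\|P_{\lambda\sharp}(\mu_j\times\nu_k)\|_p^p$. Feeding $\tilde\mu_j\times\tilde\nu_k$ into hypothesis \eqref{L3} yields $\int\|P_{\lambda\sharp}(\tilde\mu_j\times\tilde\nu_k)\|_p^p\,d\omega\lambda\lesssim r^{-s-t}\mu(B_j^n)\nu(B_k^l)$, and the bounded overlap in each factor sums the $j,k$ contributions to a global bound $r^{(p-1)(s+t-m-\tau)}\mu(B^n(0,1))\nu(B^l(0,1))$. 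A dyadic summation $r=2^{-k}$, followed by Fatou's lemma and monotone convergence, then produces the displayed limit just as in the single-factor case.

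From the limit condition, the lower bound \eqref{pr1} on $\dim P_\lambda^{-1}\{P_\lambda(x,y)\}\cap(A\times B)$ follows by the same argument as in \cite[Theorem 3.1]{M9}, applied on $\R^{n+l}$ to $\mu\times\nu$. The statement \eqref{pr2} is then deduced from \eqref{pr1} by Fubini in $(x,y)$ for fixed $\lambda$ together with $P_{\lambda\sharp}(\mu\times\nu)\ll\mathcal L^m$: the set of ``good'' values $u\in\R^m$ has full $P_{\lambda\sharp}(\mu\times\nu)$-measure, and hence positive $\mathcal L^m$-measure. The step that requires the most attention is the scaling bookkeeping for the product pushforward, but since the two factors rescale independently under $T_{a_j,r}\times T_{b_k,r}$ it reduces to applying the single-factor computation coordinate by coordinate, so no genuinely new difficulty arises.
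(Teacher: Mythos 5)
Your proposal is correct and follows essentially the same route as the paper: the same product ball cover with bounded overlap, the same Hardy--Littlewood maximal function step, the same rescaling identity $\|P_{\lambda\sharp}(\mu_j\times\nu_k)\|_p^p=r^{m+p(s+t-m)}\|P_{\lambda\sharp}(\tilde\mu_j\times\tilde\nu_k)\|_p^p$ fed into \eqref{L3}, and the same dyadic summation leading to the product analogue of \eqref{e8}, after which the conclusion is drawn from the argument of \cite[Theorem 3.1]{M9} applied to $\mu\times\nu$. The scaling bookkeeping and the deduction of \eqref{pr2} from absolute continuity of $P_{\lambda\sharp}(\mu\times\nu)$ are both handled as in the paper.
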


\begin{proof}
The proof is essentially the same as that of Theorem \ref{levela}. I sketch the main steps. We apply the same argument to $\mu\times\nu=10^{-s}(\mathcal H^s\restrict A)\times(\mathcal H^t\restrict B)$ in place of $\mu=10^{-s}\mathcal H^s\restrict A$. We have again  $\mu(B(x,r))\leq r^s$ for $x\in \Rn, r>0$ and $\nu(B(y,r))\leq r^t$ for $y\in \R^l, r>0$. 

Let $0<\sigma<s+t-m, 0<r<1$, and let $B_{j,k}=B(a_j,r)\times B(b_k,r)\subset \R^n\times\R^l,j=1,\dots ,j_1,k=1,\dots ,k_1,$ be such that $B^n(0,1)\subset\cup_jB(a_j,r/2), B^l(0,1)\subset\cup_kB(b_k,r/2)$ and the sets $B_{j,k}, j=1,\dots,j_1, k=1,\dots ,k_1,$ have bounded overlap. Let $0<\delta<r/2$ and let $\mu_j$ be the restriction of $\mu$ to $B(a_j,r)$ and $\nu_k$ the restriction of $\nu$ to $B(b_k,r)$. Then

\begin{align*}
&\iint (r^{-\sigma}\delta^{-m} \mu\times\nu(\{(u,v)\in B(x,r/2)\times B(y,r/2):\\
&|P_{\lambda}((u,v)-(x,y))|\leq\delta\}))^{p-1}\,d\mu\times\nu (u,v)\,d\omega\lambda\\
&\lesssim\sum_{j,k}\iint (r^{-\sigma}\delta^{-m} \mu_j\times\nu_k(\{(u,v):|P_{\lambda}((u,v)-(x,y))|\leq\delta\}))^{p-1}\,d\mu_j\times\nu_k (x,y)\,d\omega\lambda.
\end{align*}
From this we conclude as before that

\begin{align*}
&\iint (r^{-\sigma}\delta^{-m} \mu\times\nu(\{(u,v)\in B(x,r/2)\times B(y,r/2):\\
&|P_{\lambda}((u,v)-(x,y))|\leq\delta\}))^{p-1}\,d\mu\times\nu (x,y)\,d\omega\lambda\\
&\lesssim r^{-\sigma(p-1)}\sum_{j,k}\int\|P_{\lambda\sharp}(\mu_j\times\nu_k)\|_p^p\,d\omega\lambda.
\end{align*}

Letting $\tilde{\mu}_j = r^{-s}T_{a_j,r\sharp}(\mu_j)\in\mathcal M(B^n(0,1)), \tilde{\nu}_k = r^{-t}T_{b_k,r\sharp}(\nu_k)\in\mathcal M(B^l(0,1))$ we have
\begin{equation}\label{e2}
\|P_{\lambda\sharp}(\mu_j\times\nu_k)\|_p^p = r^{m+p(s+t-m)}\|P_{\lambda\sharp}(\tilde{\mu}_j\times\tilde{\nu}_k)\|_p^p\end{equation}
which by \eqref{L3} leads to
\begin{align*}
&\iint (r^{-\sigma}\delta^{-m} \mu\times\nu(\{(u,v)\in B(x,r/2)\times B(y,r/2):\\
&|P_{\lambda}((u,v)-(x,y))|\leq\delta\}))^{p-1}\,d\mu\times\nu (u,v)\,d\omega\lambda\\
&\lesssim r^{-\sigma(p-1)}\sum_{j,k}\int\|P_{\lambda\sharp}(\mu_j\times\nu_k)\|_p^p\,d\omega\lambda\\
&\lesssim r^{(p-1)(s+t-m-\sigma)}\sum_j\mu\times\nu(B_{j,k})\lesssim r^{(p-1)(s+t-m-\sigma)}\mu(B^n(0,1))\nu(B^l(0,1))\end{align*}
and finally for $\omega$ almost all $\lambda\in \Lambda$ and $\mu\times\nu$ almost all $(x,y)\in A\times B$, 
\begin{align*}
&\lim_{r\to 0}\liminf_{\delta \to 0} r^{-\sigma}\delta^{-m} \mu\times\nu(\{(u,v)\in B(x,r)\times B(y,r):
|P_{\lambda}((u,v)-(x,y))|\leq\delta\})=0.
\end{align*}

Again, the rest of the proof is essentially the same as that of Theorem \cite[Theorem 3.1]{M9}; one applies \cite[Lemma 3.2]{M9} to $\mu\times\nu$ in place of $\mu$. 
\end{proof}

For the opposite direction we now have the inequality $\dim P_{\lambda}^{-1}\{u\}\cap (A\times B) \leq \dim A\times B-m$ for almost all $u\in\R^m$ by   \cite[Theorem 7.7]{M5}, but we often have $\dim A\times B > s+t$.

\section{Intersections}\label{intersections}

We now apply Theorem \ref{levelprod} to the projections $P_g, P_g(x,y)=x-g(y), x,y\in\Rn, g\in O(n),$ to prove Theorem \ref{inter} on the Hausdorff dimension of intersections. All we need to do is to check the estimate \eqref{L3}, after that the proof runs as that of \cite[Theorem 4.1]{M9}.  The qualitative version of \eqref{L3} was given in the proof of \cite[Theorem 4.2]{M8}. We just have to check that that argument yields the upper bound we need. For convenience, I give essentially the whole short proof.

\begin{proof}[Proof of Theorem \ref{inter}]


Let $\mu, \nu\in\mathcal M(B^n(0,1))$ with $\mu(B(x,r))\leq r^{s}, \nu(B(x,r))\leq r^{t}$ for $x\in\R^n, r>0$.  Set for $r>1$,

$$\sigma(\nu)(r)=\int_{S^{n-1}}|\widehat{\nu}(rv)|^2\,d\sigma^{n-1}v.$$ 
Let $0<t'<t$ with $s+(n-1)t'/n>n$. Then by the results of Du and Zhang, \cite{DZ}, Theorem 2.8,
\begin{equation}\label{dz}
\sigma(\nu)(r) \lesssim r^{-(n-1)t'/n}\nu(B(0,1)).
\end{equation} 
The factor $\nu(B(0,1))$ is not stated in  \cite{DZ}, but we shall check it below. For $n=2$ this estimate was proved by Wolff in \cite{W}, see \cite[Theorem 16.1]{M6} where this bound is explicitly stated.
To apply Theorem \ref{levelprod} we need that the implicit constant here is independent of $\nu$ as long as $\nu\in\mathcal M(B^n(0,1))$ and $\nu(B(x,r))\leq r^{t}$ for $x\in\R^n, r>0$. 

We now check \eqref{dz}, cf. the proof \cite[Proposition 16.3]{M6}.  For $R>1$, define $\nu_R$ by $\int f\,d\nu_R = \int f(Rx)\,d\nu x$. Then $\widehat{\nu_R}(x) = \widehat{\nu}(Rx).$ By \cite[Theorem 2.3]{DZ} (for the sphere instead of parabola) for every $\e>0$,
$$\int |\widehat{f\sigma^{n-1}}|^2\,d\nu_R \lesssim R^{-(n-1)t/n+\e}\|f\|_{L^2(S^{n-1)})}^2,$$
where the implicit constant depends on $\e$ but not on $\nu$. Hence by duality and the Schwartz inequality,
\begin{align*}
&\int_{S^{n-1}} |\widehat{\nu_R}|^2\,d\sigma^{n-1}=\sup_{\|f\|_{L^2(S^{n-1)})}=1}\left |\int_{S^{n-1}} \widehat{\nu_R}f\,d\sigma^{n-1}\right |^2\\
&=\sup_{\|f\|_{L^2(S^{n-1)})}=1}\left |\int_{S^{n-1}} \widehat{f\sigma^{n-1}}\,d\nu_R\right |^2\lesssim R^{-(n-1)t/n+\e}\nu_R(B(0,R)).
\end{align*}
By the definition of $\nu_R$ this is
$$\int_{S^{n-1}} |\widehat{\nu}(Rx)|^2\,d\sigma^{n-1}x \lesssim R^{-(n-1)t/n+\e}\nu(B(0,1)),$$
which is the required estimate.

As   $\widehat{P_{g\sharp}(\mu\times\nu)}(\xi)=\widehat{\mu}(\xi)\widehat{\nu}(-g^{-1}(\xi)), |\widehat{\mu}(\xi)|^2\lesssim \mu(B(0,1))^2\leq \mu(B(0,1))$ and similarly $\sigma(\nu)(r)\lesssim \nu(B(0,1))$, we have 
\begin{equation}\label{e7}
\begin{split}
&\iint|\widehat{P_{g\sharp}(\mu\times\nu)}(\xi)|^2\,d\xi\,d\theta_n g
=c\int\sigma(\nu)(|\xi|)|\widehat{\mu}(\xi)|^2\,d\xi\\
&\lesssim \mathcal L^n(B(0,1))\mu(B(0,1))\nu(B(0,1)) + \int_{|\xi|>1}|\widehat{\mu}(\xi)|^2|\xi|^{-(n-1)t'/n}\,d\xi\nu(B(0,1))\\
&=\mathcal L^n(B(0,1))\mu(B(0,1))\nu(B(0,1)) +c'I_{n-(n-1)t'/n}(\mu)\nu(B(0,1))\\
&\leq  C(n,s,t')\mu(B(0,1))\nu(B(0,1)).
\end{split}
\end{equation}
The easy estimate $I_{n-(n-1)t'/n}(\mu)\lesssim\mu(B(0,1))$ holds since $n-(n-1)t'/n<s$, see, for example, the last display on page 19 of \cite{M6}.  

We can now apply \eqref{pr1}\
 of Theorem \ref{levelprod}. It gives $\dim P_g^{-1}\{P_g(x,y)\}\cap (A\times B) \geq s+t-n$ for $\mathcal H^s\times\mathcal H^t\times\theta_n$ almost all $(x,y,g)\in A\times B\times O(n)$. Notice that $(u,v)\in P_g^{-1}\{P_g(x,y)\}\cap (A\times B)$ if and only if $u\in A, v\in B$ and $u=g(v-y)+x$, that is,
\begin{equation}\label{prodproj}
A\cap (g(B-y)+x)=\Pi(P_g^{-1}\{P_g(x,y)\}\cap (A\times B)),\end{equation}
where the projection $\Pi(x,y)=x$ is a constant times isometry on any $n$-plane $\{(u,v):u=g(v)+w\}$.
Hence \eqref{e5} follows. In the same way \eqref{e6} follows from \eqref{pr2} of Theorem \ref{levelprod}.
\end{proof}

Dimension estimates for sets of exceptional orthogonal transformations were obtained in \cite{M1}. The following theorem improves those estimates except when one of the dimensions $s$ and $t$ is at most $(n-1)/2$.

\begin{thm}\label{exinter}
Let $s$ and $t$ be positive numbers with $s+t > n$. If $A\subset\R^n$ is $\mathcal H^s$ measurable with $0<\mathcal H^s(A)<\infty$ and $B\subset\R^n$ is $\mathcal H^t$ measurable with $0<\mathcal H^t(B)<\infty$, then 
there is a Borel set $E\subset O(n)$ such that 
\begin{equation}\label{ex3} 
\dim E\leq n(n-1)/2-(s+(n-1)t/n-n), \end{equation}
and for  $g\in O(n)\setminus E$,
\begin{equation}\label{eq4} 
\mathcal L^n(\{z\in\Rn: \dim A\cap (g(B)+z)\geq s+t-n\})>0.
\end{equation}
\end{thm}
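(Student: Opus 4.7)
The plan is to argue by contradiction, running the Fourier-analytic proof of Theorem \ref{inter} with the Haar measure $\theta_n$ on $O(n)$ replaced by a Frostman measure supported on the alleged exceptional set, and then invoking Theorem \ref{levelprod} to derive a contradiction. Set $\eta := s + (n-1)t/n - n$; if $\eta\leq 0$ one takes $E = O(n)$ and the claim is trivial, so assume $\eta>0$ and let $E\subset O(n)$ be the Borel set of $g$ for which \eqref{eq4} fails. Suppose for contradiction $\dim E > n(n-1)/2 - \eta$, choose $\kappa$ with $\dim E > \kappa > n(n-1)/2 - \eta$, and use Frostman's lemma to produce a nonzero finite Borel measure $\tau$ supported on a compact subset of $E$ with $\tau(B(g,r))\leq r^{\kappa}$ for all $g\in O(n),\ r>0$. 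Normalize $\mu = c\mathcal H^s\restrict A$ and $\nu = c\mathcal H^t\restrict B$ to lie in $\mathcal M(B^n(0,1))$ with $\mu(B(x,r))\leq r^s$ and $\nu(B(y,r))\leq r^t$, as in the proof of Theorem \ref{inter}. The aim is then to verify hypothesis \eqref{L3} of Theorem \ref{levelprod} for $\omega = \tau$, $p = 2$, $m = n$ and the projections $P_g(x,y) = x - g(y)$; by Parseval this reduces to
\begin{equation*}
\int |\widehat{\mu}(\xi)|^2 J(\xi)\, d\xi \lesssim \mu(B^n(0,1))\nu(B^n(0,1)), \qquad J(\xi) := \int |\widehat{\nu}(g^{-1}\xi)|^2\, d\tau g.
\end{equation*}
Once this bound is in hand, Theorem \ref{levelprod} combined with \eqref{prodproj} forces \eqref{eq4} for $\tau$-almost every $g$, contradicting $\tau(E) > 0$.

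The core step is a Frostman-weighted analogue of the spherical-average bound \eqref{dz}. For $\xi\neq 0$ consider the smooth surjection $\pi\colon O(n)\to S^{n-1}$, $\pi(g) = g^{-1}\xi/|\xi|$, whose fibres are cosets of the stabilizer of $\xi/|\xi|$, isomorphic to $O(n-1)$ of dimension $(n-1)(n-2)/2$. A routine tube-covering argument yields that $\tau_0 := \pi_{\sharp}\tau$ satisfies a Frostman condition on $S^{n-1}$ with exponent $\alpha := \kappa - (n-1)(n-2)/2$. Rewriting $J(\xi) = \int_{S^{n-1}}|\widehat{\nu}(|\xi| y)|^2\, d\tau_0 y$ and repeating the duality argument that verified \eqref{dz} in the proof of Theorem \ref{inter}, with $\tau_0$ replacing $\sigma^{n-1}$ and \cite[Theorem 2.3]{DZ} tested against an $\alpha$-Frostman sphere measure, one expects, for every $\e>0$,
\begin{equation*}
J(\xi) \lesssim (1+|\xi|)^{-\beta+\e}\nu(B^n(0,1)), \qquad \beta := (n-1)t/n - \bigl(n(n-1)/2 - \kappa\bigr).
\end{equation*}
Substituting this splits the outer integral into a bounded piece on $|\xi|\leq 1$ and a tail comparable to the Riesz $(n-\beta)$-energy $I_{n-\beta}(\mu)$. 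By \eqref{eq3} this tail is finite and controlled by a constant multiple of $\mu(B^n(0,1))$ exactly when $n-\beta<s$, and a direct calculation shows this is equivalent to $\kappa > n(n-1)/2 - \eta$, our choice of $\kappa$. Hence \eqref{L3} holds, completing the contradiction.

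The main obstacle is precisely the weighted spherical-average bound above: one must show that the \cite{DZ} extension estimate degrades cleanly by the codimension $n(n-1)/2 - \kappa$ of $\tau$ in $O(n)$ when paired against the pushforward $\tau_0$ rather than the full surface measure $\sigma^{n-1}$. The remainder of the argument is a direct adaptation of the Theorem \ref{inter} proof, together with a brief check that the set $E$ can be taken Borel.
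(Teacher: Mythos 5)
Your overall skeleton is exactly the paper's: assume $A,B$ compact so that the exceptional set $E$ is Borel, suppose $\dim E$ exceeds the claimed bound, put a $\kappa$-Frostman measure $\tau$ on a compact subset of $E$, verify \eqref{L3} for $\omega=\tau$, $p=2$, $m=n$, $P_g(x,y)=x-g(y)$, and invoke Theorem \ref{levelprod} together with \eqref{prodproj} to obtain \eqref{eq4} for $\tau$-almost every $g$, a contradiction. The paper disposes of the key $L^2$ estimate by citing the proof of \cite[Theorem 4.3]{M8}, only remarking that the factor $\mu(B^n(0,1))\nu(B^n(0,1))$ has to be tracked; you instead try to prove it. Your exponent bookkeeping is correct: the fibres of $g\mapsto g^{-1}\xi/|\xi|$ do give the Frostman exponent $\alpha=\kappa-(n-1)(n-2)/2$ for the pushforward $\tau_0$, and $n-\beta<s$ is indeed equivalent to $\kappa>n(n-1)/2-\eta$.

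The gap is where you yourself flag it, and it is real: the bound $J(\xi)\lesssim(1+|\xi|)^{-\beta+\e}\nu(B^n(0,1))$ does \emph{not} follow by ``repeating the duality argument with $\tau_0$ replacing $\sigma^{n-1}$.'' Dualizing $\int|\widehat{\nu_R}|^2\,d\tau_0$ produces the extension operator $f\mapsto\widehat{f\tau_0}$ associated with the fractal measure $\tau_0$ and the norm $\|f\|_{L^2(\tau_0)}$, and \cite[Theorem 2.3]{DZ} says nothing about that operator; it only controls $\widehat{f\sigma^{n-1}}$ against $\|f\|_{L^2(\sigma^{n-1})}$. So the single inequality your proof hinges on is asserted, not proved. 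The estimate is nevertheless true, and the standard fix avoids duality: since $\nu$ is supported in $B(0,1)$ one has $\widehat{\nu}=\widehat{\chi}\ast\widehat{\nu}$ for a cutoff $\chi$, whence $|\widehat{\nu}|^2$ is pointwise dominated by a rapidly decreasing average of itself over unit balls. Therefore
\begin{equation*}
\int|\widehat{\nu}(Rv)|^2\,d\tau_0 v\lesssim\int|\widehat{\nu}(\eta)|^2\Bigl(\int w(Rv-\eta)\,d\tau_0 v\Bigr)d\eta,
\end{equation*}
where the inner integral is $\lesssim R^{-\alpha}$ and essentially supported on the annulus $\bigl||\eta|-R\bigr|\lesssim 1$; integrating the full surface-measure bound \eqref{dz} over that annulus yields $R^{\,n-1-\alpha-(n-1)t'/n}\nu(B(0,1))$, which is exactly your $\beta$ up to the $t'<t$ loss. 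With that step supplied --- or by simply citing \cite[Theorem 4.3]{M8} as the paper does --- your argument closes.
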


Note that $n(n-1)/2$ is the dimension of $O(n)$ so that \eqref{ex3} is relevant only when $s+(n-1)t/n>n$. In \cite{M1} the weaker estimate $\dim E\leq n(n-1)/2-(s+t-n-1)$ was derived. In the case $s$ or $t$ is at most $(n-1)/2$, then, essentially,  
\begin{equation*} 
\dim E\leq n(n-1)/2-(s+t-n). \end{equation*}
See \cite[Theorem 1.3]{M1} for the slightly weaker precise statement. I believe that the upper bounds $n(n-1)/2-(s+t-n)$, when $n\geq 3$, and, when $n=2$, $3-s-t$, if $s+t\leq 3$ and $0$, if $s+t\geq 3$,  should always be valid.

\begin{proof}[Proof of Theorem \ref{exinter}]
We may assume that $A$ and $B$ are compact. Then  the set $E$ of $g\in O(n)$ such that \eqref{eq4} fails is a Borel set. If $\dim E > n(n-1)/2-(s+(n-1)t/n-n)$, we can choose a compact subset $F$ of $E$ and $\a$ such that $\dim F>\a > n(n-1)/2-(s+(n-1)t/n-n)$. Then there is $\theta\in\mathcal M(F)$ such that $\theta(B(g,r))\leq r^{\a}$ for $g\in O(n), r>0$. Let $\mu, \nu\in\mathcal M(B^n(0,1))$ be such that $\mu(B(x,r))\leq r^s$ and $\nu(B(x,r))\leq r^t$ for $x\in \Rn, r>0$.  

We shall apply Theorem \ref{levelprod} with $\Lambda=F$ and $P_g(x,y)=x-g(y)$. By the proof of \cite[Theorem 4.3]{M8},
$$\iint|\widehat{P_{g\sharp}(\mu\times\nu)}(\xi)|^2\,d\xi\,d\theta g< C\mu(B^n(0,1))\nu(B^l(0,1)),$$
with $C$ independent of $\mu$ and $\nu$. The right hand side is not explicitly stated in \cite{M8}, but it can be checked in the same way as in the proof of Theorem \ref{inter}. Then, as in the proof of Theorem \ref{inter}, it follows from Theorem \ref{levelprod} and the formula \eqref{prodproj} that 
\eqref{eq4} holds for $\theta$ almost all $g\in F$. This contradiction completes the proof.
\end{proof}

\section{Some comments}
For the applications to intersections we only used Theorem \ref{levela} with $p=2$, which, essentially, was already proved in \cite{M9}. For  a similar method  Harris \cite{H} also needed the case $p<2$; he proved the $L^{3/2}$ estimate for vertical projections in the first Heisenberg group and derived from it the almost sure dimension of line sections.

Orponen proved in \cite{O3} an $L^p$ estimate for radial projections with some $p>1$. If the analog of Theorem \ref{level} or \ref{levela} would be true in this setting, it would  
improve some of the results of \cite{MO}. Although most of the arguments for Theorems \ref{level} and \ref{levela} work much more generally, the scaling argument, in particular \eqref{e1}, seems to require right kind of scaling properties of the maps. 

\vspace{1cm}
\begin{footnotesize}
{\sc Department of Mathematics and Statistics,
P.O. Box 68,  FI-00014 University of Helsinki, Finland,}\\
\emph{E-mail address:} 
\verb"pertti.mattila@helsinki.fi" 

\end{footnotesize}

\end{document}